\newcommand{\lb}{\ensuremath{\llbracket}}
\newcommand{\rb}{\ensuremath{\rrbracket}}
\newcommand{\<}{\ensuremath{\langle}}
\newcommand{\cB}{\ensuremath{\mathcal{B}}}
\renewcommand{\>}{\ensuremath{\rangle}}
\theoremstyle{plain}
\newtheorem{theorem}{Theorem}[section]
\newtheorem{corollary}[theorem]{Corollary}
\newtheorem{lemma}[theorem]{Lemma}
\theoremstyle{definition}
\newcounter{claim}
\newcounter{conjecture}
\newcounter{prob}
\theoremstyle{remark}
\newtheorem*{remark}{Remark}
\numberwithin{theorem}{section}
\numberwithin{claim}{section}
\numberwithin{equation}{section}
\numberwithin{conjecture}{section}
\newcommand{\Peter}{P{\'e}ter}
\newcommand{\Palfy}{P\'alfy}
\newcommand{\Pudlak}{Pudl\'ak}
\newcommand{\defn}[1]{\emph{#1}}
\renewcommand{\leq}{\ensuremath{\leqslant}}
\renewcommand{\nleq}{\ensuremath{\nleqslant}}
\renewcommand{\geq}{\ensuremath{\geqslant}}
\newcommand{\subnormal}{\ensuremath{\trianglelefteqslant}}
\newcommand{\meet}{\ensuremath{\wedge}}
\newcommand{\join}{\ensuremath{\vee}}
\newcommand{\Meet}{\ensuremath{\bigwedge}}
\newcommand{\Aut}{\ensuremath{\operatorname{Aut}}}
\newcommand{\Eq}{\ensuremath{\operatorname{Eq}}}
\newcommand{\Sub}{\ensuremath{\operatorname{Sub}}}
\newcommand{\Sym}{\ensuremath{\operatorname{Sym}}}
\newcommand{\core}{\ensuremath{\operatorname{core}}}
\newcommand{\sG}{\ensuremath{\mathfrak{X}}}
\newcommand{\G}{\ensuremath{\mathfrak{G}}}
\newcommand{\bH}{\ensuremath{\mathbf{H}}}
\newcommand{\sK}{\ensuremath{\mathscr{K}}}
\newcommand{\sL}{\ensuremath{\mathscr{L}}}
\newcommand{\fL}{\ensuremath{\mathfrak{L}}}
\newcommand{\sP}{\ensuremath{\mathscr{P}}}
\newcommand{\cP}{\ensuremath{\mathcal{P}}}
\newcommand{\IE}{{\small IE}}
\renewcommand{\phi}{\ensuremath{\varphi}}
\begin{document}

\title{Interval enforceable properties of finite groups}

\author{William DeMeo}

\address{Department of Mathematics, University of South Carolina, Columbia, SC 29208, United States}

\email{williamdemeo@gmail.com}
\urladdr{http://williamdemeo.org}

\subjclass[2010]{Primary 20D30; Secondary 06B15, 08A30}

\keywords{subgroup lattice; congruence lattice; group properties}

\thanks{The author wishes to thank the anonymous referee for many
excellent suggestions that greatly improved the presentation of the paper.}

\begin{abstract}

We propose a classification of group properties according to whether they can be deduced from the assumption that a group's subgroup lattice contains an interval isomorphic to some lattice. We are able to classify a few group properties as being ``interval enforceable'' in this sense, and we establish that other properties satisfy a weaker notion of ``core-free interval enforceable.''  We also show that if there exists a group property and its negation that are both core-free interval enforceable, this would settle an important open question in universal algebra.

\end{abstract}

\maketitle

\newacro{FLRP}{{\it Finite Lattice Representation Problem}}
\newacro{IE}{{\it interval enforceable}}
\newacro{cfIE}[cf-IE]{{\it core-free interval enforceable}}
\newacro{minIE}[min-IE]{{\it minimal interval enforceable}}
\newacro{CFSG}{Classification of Finite Simple Groups}


\section{Introduction}
\label{sec:intro}
The study of subgroup lattices has a long history that began with
Richard Dedekind~\cite{Dedekind:1877} 
and 
Ada Rottlaender~\cite{Rottlaender:1928}, and
continued with important contributions by Reinhold Baer, 
{\O}ystein Ore, 
Michio Suzuki, 
Roland Schmidt, 
and many others (see Schmidt~\cite{Schmidt:1994}).
Much of this work focuses on the problem of deducing
properties of a group $G$ from assumptions about the structure of its lattice of
subgroups, $\Sub(G)$,  or, conversely, deducing lattice theoretical properties
of $\Sub(G)$ from assumptions about $G$. 

Historically, less attention was paid to the local structure of the
subgroup lattice of a finite group, perhaps because it seemed that very little
about $G$ could be inferred from knowledge of, say, an \emph{upper
  interval}, $\lb H,G \rb = \{K \mid H\leq K \leq G\}$,
in the subgroup lattice of $G$.
Recently, however, this topic has attracted more attention (see, e.g.,
\cite{Aschbacher:2009,Lucchini:1997,Basile:2001,Borner:1999,Kohler:1983,Lucchini:1994a,Palfy:1988,Palfy:1995}),
mostly owing to its connection with one of the most important open
problems in universal algebra,
the \ac{FLRP}. This is the problem of
characterizing the lattices that are (isomorphic to) congruence lattices of
finite algebras (see, e.g., \cite{Berman:1970,DeMeo:thesis,Palfy:1995,Palfy:2001}). 
There is a remarkable theorem relating this problem to intervals in subgroup
lattices of finite groups. 
\begin{theorem}[\Palfy\ and \Pudlak~\cite{Palfy:1980}]
\label{thm:P5}
The following statements are equivalent:
\begin{enumerate}[(A)]
\item Every finite lattice is isomorphic to
  the congruence lattice of a finite algebra.
\item Every finite lattice is isomorphic to
  an interval in the subgroup lattice of a finite group.
\end{enumerate}
\end{theorem}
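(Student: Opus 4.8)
The plan is to prove the two implications separately, and to note at the outset that (B) $\Rightarrow$ (A) is the routine direction while (A) $\Rightarrow$ (B) carries essentially all the difficulty. Throughout I would exploit the classical correspondence between congruences of a transitive permutation action and the subgroups lying above a point stabilizer.

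For (B) $\Rightarrow$ (A): suppose $L \cong \lb H, G\rb$ for a finite group $G$ and a subgroup $H \leq G$. I would form the \Gset{} $\bA = \langle G/H; \{\sigma_g : g \in G\}\rangle$, whose universe is the set of left cosets of $H$ and whose operations are the unary maps $\sigma_g \colon xH \mapsto gxH$. Since congruences of an algebra are exactly the equivalence relations invariant under all unary polynomial operations, and here every unary polynomial is some $\sigma_g$ (possibly composed with a constant), the congruences of $\bA$ are precisely the $G$-invariant equivalence relations on $G/H$, that is, the systems of blocks of imprimitivity. The standard bijection sending a subgroup $K$ with $H \leq K \leq G$ to the partition of $G/H$ whose classes are the $H$-cosets inside each $K$-coset is an order isomorphism $\lb H, G\rb \to \Con(\bA)$. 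Hence $\Con(\bA) \cong L$, giving (A).

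For (A) $\Rightarrow$ (B), suppose $L \cong \Con(\bA)$ for some finite algebra $\bA = \langle A; F\rangle$. The first step is a reduction to a unary algebra: because an equivalence relation is a congruence if and only if it is invariant under every unary polynomial operation (change one argument of a basic operation at a time and close under transitivity), I may replace $F$ by the transformation monoid $M = \mathrm{Pol}_1(\bA) \leq A^A$ without disturbing the congruence lattice, so that $L \cong \Con(\langle A; M\rangle)$ and a congruence is just an $M$-invariant equivalence relation. The crux is then to realize this lattice of $M$-invariant equivalence relations as the lattice of block systems of a single transitive group action. I would build a finite set $\Omega$ together with a transitive permutation group $G \leq \Sym(\Omega)$, with point stabilizer $H$, engineered so that the $G$-invariant partitions of $\Omega$ correspond exactly to the $M$-invariant partitions of $A$; applying the correspondence of the easy direction in reverse then yields $\lb H, G\rb \cong \Con(\langle A; M\rangle) \cong L$.

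The hard part, and the real content of the theorem, is the construction of $\Omega$ and $G$ in that last step. The obstruction is that the elements of $M$ are arbitrary transformations, typically non-injective, whereas a group must act by permutations; one must therefore ``invert'' the collapsing maps of $M$ on an enlarged set while recording just enough of their action to reproduce the invariant partitions and nothing more. I expect to accomplish this by spreading each point of $A$ over a large fiber inside $\Omega$ and defining, for each generator $f \in M$, a permutation of $\Omega$ that induces $f$ at the level of fibers; the group generated by these permutations (together with enough extra symmetries to force transitivity) must be calibrated so that it creates no spurious blocks and destroys none of the intended ones. Verifying that the lattice of blocks is neither too large nor too small, so that it matches $\Con(\langle A; M\rangle)$ exactly, is where the combinatorial care lies, and it is this simulation of a transformation monoid by a permutation group that I anticipate as the principal difficulty.
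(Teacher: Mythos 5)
Your (B)~$\Rightarrow$~(A) half is correct and is exactly the standard coset construction (the same correspondence between $\lb H,G\rb$ and the congruence lattice of the \Gset{} on $G/H$ that the paper itself invokes later with citations), and your reduction of an arbitrary finite algebra to its unary polynomial monoid $M=\mathrm{Pol}_1(\bA)$ is also a correct standard step. The genuine gap is in the rest of (A)~$\Rightarrow$~(B): your plan uses only the representability of the \emph{single} lattice $L$ and then attempts a direct, lattice-by-lattice simulation of the transformation monoid $M$ by a transitive permutation group whose block-system lattice matches $\Con(\langle A; M\rangle)$ exactly. No such construction is known, and you do not supply one --- the ``calibration'' you defer is precisely the open difficulty, not a technical verification. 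The obstruction is concrete: blowing each point up into a fiber so as to invert the non-injective elements of $M$ inevitably introduces new $G$-invariant partitions (for instance, partitions refining or cutting across the fiber decomposition), and the group generated by your fiber-level permutations together with the ``extra symmetries to force transitivity'' has block systems that nothing in the sketch controls. Moreover, if your approach worked it would prove the strictly stronger statement that \emph{every} congruence lattice of a finite algebra is isomorphic to an interval in a subgroup lattice of a finite group --- a lattice-by-lattice transfer that the \PP{} theorem pointedly does not assert and which is not known. The theorem is an equivalence of two universally quantified statements, and any correct proof of (A)~$\Rightarrow$~(B) must use the full hypothesis that \emph{all} finite lattices are representable; your argument never does, which is a structural signal that it cannot succeed as stated.

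The paper gives no proof (it cites~\cite{Palfy:1980}), and the actual argument there is global rather than constructive in your sense: given $L$, one applies hypothesis (A) not to $L$ itself but to an auxiliary finite lattice built from $L$ (one adjoins a new bottom element, so that $L$ sits as an upper interval above the resulting atom), and then takes a finite algebra $\bA$ representing that auxiliary lattice of \emph{minimal cardinality}. Minimality is then exploited to show that every unary polynomial of $\bA$ is either constant or a permutation, and that the group $G$ of polynomial permutations acts transitively; hence $\bA$ is polynomially equivalent to a transitive \Gset{}, so by the easy correspondence its congruence lattice is $\lb H, G\rb$ for a point stabilizer $H$, and $L$ is recovered as the upper interval above the subgroup corresponding to the adjoined atom. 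In other words, the passage from monoids to groups is achieved not by simulating $M$ on an enlarged set but by choosing the representation (guaranteed by the full strength of (A)) so economically that no genuinely non-invertible unary polynomials can survive. Your proposal is missing this idea, and without it the final step of your outline is a restatement of the problem rather than a proof.
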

If these statements are true (resp., false), then we say the \acs{FLRP} has
a positive (resp., negative) answer. 
Thus, if we can find a finite lattice $L$ for which it can be proved that there
is no finite group $G$ with $L \cong \lb H,G \rb$ for some $H< G$, then the
\acs{FLRP} has a negative answer.  

In this paper we propose a new classification of group properties according to
whether or not they can be deduced from the assumption that $\Sub(G)$ has an upper
interval isomorphic to some finite lattice.  We believe that discovering which 
group properties can (or cannot) be connected to the local structure
of a subgroup lattice is itself a worthwhile endeavor, but we will also describe 
how this classification could provide a solution of the \acs{FLRP}.  

Suppose $\cP$ is a \emph{group theoretical property}\footnote{This and other
  italicized terms in the introduction will be defined more formally in
  Section~\ref{sec:notation-definitions}.}  
and suppose there exists a finite lattice $L$ such that if $G$ is a finite group
with $L \cong \lb H,G \rb$ for some $H\leq G$, then $G$ has property $\cP$.  We call
such a property $\cP$ \ac{IE}.  If the lattice involved is germaine to the
discussion, we say that $\cP$ is  \emph{interval enforceable by} $L$.
An \defn{interval enforceable class of groups} is a class of groups all of which
have a common interval enforceable property.

Although it depends on the lattice $L$, generally speaking it is difficult to 
deduce very much about a group $G$ from the assumption that an upper interval
in $\Sub(G)$ is isomorphic to $L$.  It becomes easier easier if, in addition to the
hypothesis $L\cong\lb H,G \rb$, we assume that the subgroup $H$ is
\emph{core-free} in $G$; that is, $H$ contains no nontrivial normal subgroup of
$G$.  Properties of $G$ that can be deduced from these assumptions are what we
call \ac{cfIE}. 

Extending this idea, we consider finite collections $\sL$ of finite lattices
and ask what can be proved about a group $G$ if one assumes that each 
$L_i\in \sL$ is isomorphic to an upper interval $\lb H_i, G \rb\leq \Sub(G)$, with
each $H_i$ core-free in $G$.  Clearly, if $\Sub(G)$ has such upper intervals,
and if corresponding to each $L_i\in \sL$ there is a property
$\cP_i$ that is \ac{cfIE} by $L_i$, then $G$ must have all of the properties
$\cP_i$. A related question is the following: Given a set $\sP$ of
\ac{cfIE} properties, is the conjunction $\Meet \sP$ \ac{cfIE}?  
Corollary \ref{cor:isle-prop-groups-1} answers this question affirmatively. 

In this paper, we will identify some group properties that are \ac{cfIE}, and
others that are not. We will see that the \ac{cfIE} properties
found thus far are negations of common group properties (for
example, ``not solvable,'' ``not almost simple,'' ``not alternating,'' ``not
symmetric'').  Moreover, we prove that in these special cases the
corresponding group properties (``solvable,'' ``almost simple,''
``alternating,''  ``symmetric'') that are not \ac{cfIE}.  This and
other considerations suggest that a group property and its
negation cannot both be \ac{cfIE}.  As yet, we are unable to prove this.
A related question is whether, for every group property
$\cP$, either $\cP$ is \ac{cfIE} or $\neg \cP$ is \ac{cfIE}. 

Our main result (Theorem~\ref{thm-wjd-1}) connects the
foregoing ideas with the \acs{FLRP}, as follows:\\[6pt]
{\it 
Statement (B) of Theorem~\ref{thm:P5} is equivalent to the following statement:

\begin{enumerate}
\item[(C)]
Fix $n\geq 2$ and let $\sL = \{L_1, \dots, L_n\}$ be any collection of
finite lattices at least two of which have more than two elements.
For each $i = 1, 2, \dots, n$, let $\sG_i$ denote the class that is
core-free interval enforcable by $L_i$. Then there exists a finite group $G \in
\bigcap\limits_{i=1}^n \sG_i$ such that for 
each $L_i \in \sL$ we have $L_i\cong \lb H_i, G \rb$ for some subgroup
$H_i$ that is core-free in $G$. 
\end{enumerate}

\begin{remark}
By (C), the \acs{FLRP} would have a negative answer if we
could find a collection $\sG_1, \dots, \sG_n$ of \acs{cfIE} classes
such that $\bigcap\limits_{i=1}^n \sG_i$ is empty.
\end{remark}
}

Core-free interval enforceable properties are related to
permutation representations of groups.
If $H$ is a core-free subgroup of $G$, then $G$ has a faithful permutation 
representation $\phi:G\hookrightarrow \Sym(G/H)$.
Let $\<G/H, \phi(G)\>$ denote the algebra comprised of the right cosets
$G/H$ acted upon by right multiplication by elements of $G$; that is,
$\phi(g): Hx \mapsto Hxg$.  It is
well known that the congruence lattice of this algebra (i.e., the lattice of
systems of imprimitivity) 
 is isomorphic to the interval $\lb H, G \rb$ in the subgroup
lattice of $G$.\footnote{See \cite[Lemma 4.20]{alvi:1987}
or~\cite[Theorem 1.5A]{Dixon:1996}.}
This puts statement (C) into perspective.
If the \acs{FLRP} has a positive answer, then no matter 
what we take as our finite collection $\sL$---for example, we
might take $\sL$ to be \emph{all} finite lattices with
at most $N$ elements for some large $N< \omega$---we can always find a \emph{single}
finite group $G$ such that every lattice in $\sL$ is isomorphic to the interval
in $\Sub(G)$ above a core-free subgroup.
As a result, this group $G$ must have so many faithful
representations  $G\hookrightarrow \Sym(G/H_i)$ with systems of imprimitivity
isomorphic to $L_i$,
one such
representation for each distinct $L_i\in \sL$.  Moreover, the group $G$ having
this property can be chosen from the class $\bigcap\limits_{i=1}^n \sG_i$, where 
$\sG_1, \dots, \sG_n$ is an arbitrary collection of \acs{cfIE} classes of groups.

\section{Notation and definitions}
\label{sec:notation-definitions}
In this paper, \emph{all groups and lattices are finite}.  We use 
$\G$ to denote the class of all finite groups.
Given a group $G$, we denote the set of subgroups of $G$ by $\Sub(G)$.  The
algebra $\<\Sub(G), \meet, \join\>$ is a lattice where the $\meet$ (``meet'') and
$\join$ (``join'') operations are defined for all $H$ and $K$ in $\Sub(G)$ by
$H\meet K = H\cap K$ and $H\join K = \<H, K\> = $ the smallest subgroup of $G$
containing both $H$ and $K$.  We will refer to the set
$\Sub(G)$ as a lattice, without explicitly mentioning the $\meet$ and
$\join$ operations.

By $H \leq G$ (resp.,
$H < G$) we mean $H$ is a subgroup (resp., proper subgroup) of $G$.
For $H\leq G$, the
\emph{core of $H$ in $G$}, denoted by $\core_G(H)$, is the largest normal subgroup of $G$
contained in $H$.
If $\core_G(H)=1$, then we say that $H$ is \emph{core-free in $G$}.
For $H\leq G$,
by the \defn{interval} $\lb H, G \rb$ we mean 
the set $\{K \mid H\leq K \leq G\}$, which is a
sublattice of $\Sub(G)$.
With this notation, $\Sub(G)=\lb 1,G \rb$.
When viewing $\lb H,G \rb$ as a
sublattice of $\Sub(G)$, we sometimes refer to it as an \defn{upper interval}. 
Given a lattice $L$ and a group $G$, the expression $L \cong \lb H, G \rb$ will
mean that there exists a subgroup $H \leq G$ such that $L$ is isomorphic to the
interval $\{K \mid H\leq K \leq G\}$ in the subgroup lattice of $G$.

By a \defn{group theoretical class}, or \defn{class of groups}, we mean a
collection $\sG$ of groups that is closed under isomorphism:
if $G_0\in \sG$ and  $G_1\cong G_0$, then $G_1\in \sG$.
A \defn{group theoretical property}, or simply \defn{property of groups},
is a property $\cP$ such that if a group $G_0$ has property $\cP$ and
$G_1\cong G_0$, then $G_1$ has property $\cP$.\footnote{It seems there
  is no single standard definition of \emph{group theoretical class}.
  While some authors (e.g.,~\cite{Doerk:1992}, \cite{BBE:2006}) use the same
  definition we use here, others (e.g.~\cite{Robinson:1996}, \cite{Rose:1978})
  require that every group theoretical class contains the one element group. 
  In the sequel we consider negations of group properties, and we would
  like these to qualify as group properties.  Therefore, we don't require
  that every group theoretical class contains the one element group.}   
Thus if $\sG_{\cP}$ denotes the collection of all groups having the group
property $\cP$, then  $\sG_{\cP}$  is a class of groups, and belonging to a
particular class of groups is a group theoretical property.

If $\sK$ is a class of algebras (e.g., a class of groups), then we say that
$\sK$ is \emph{closed under homomorphic images} and we write $\bH(\sK) = \sK$
provided $\phi(G)\in \sK$ whenever $G\in \sK$ and $\phi$ is a homomorphism of
$G$.

Let $\fL$
denote the class of all finite lattices, and $\G$ the class of all 
finite groups. Let $\cP$ be a group theoretical property and $\sG_\cP$
the associated class of all groups with property $\cP$.  
We call $\cP$ (and $\sG_\cP$)
\begin{itemize}
\item 
\acf{IE} provided
\[
(\exists L\in \fL)  \; (\forall G \in \G) \; \bigl(L\cong \lb H,G \rb \; \longrightarrow \; G
\in \sG_\cP\bigr)
\]
\item
\acf{cfIE} provided
\[
(\exists L\in \fL)  \; (\forall G\in \G) \; \bigl( L\cong \lb H,G \rb \; \Meet \; \core_G(H)=1
\; \longrightarrow \; G  \in \sG_\cP \bigr)
\]
\item 
\acf{minIE}
provided there exists $L\in \fL$ such that if $L\cong \lb H,G \rb$ for some
group $G\in \G$ of minimal order (with respect to $L\cong \lb H,G \rb$), then
$G \in \sG_\cP$.
\end{itemize}
In this paper we will have little to
say about min-\IE\ properties.  Nonetheless, we include this class in our list
of new definitions because properties of this type arise often (see, e.g.,
\cite{Lucchini:1994a}), and a primary aim of this paper is to formalize
various notions of interval enforceability that we believe are
useful in applications. 

\section{Results}
Clearly, if $\cP$ is an interval enforceable property, then it is also
core-free interval enforceable.  There is an easy
sufficient condition under which the converse holds.  
Suppose $\cP$ is a group property, let $\sG_{\cP}$  denote the
class of all groups with property $\cP$, and let
 $\sG_{\cP}^c$ denote the class of all groups that do not have property $\cP$.
\begin{lemma}
\label{lemma-wjd-2}
Suppose $\cP$ is a core-free interval enforceable property.  
If $\bH(\sG_{\cP}^c) = \sG_{\cP}^c$, then $\cP$ is an interval enforceable property.
\end{lemma}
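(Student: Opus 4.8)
The plan is to show that the \emph{very same} lattice $L$ that witnesses core-free interval enforceability of $\cP$ already witnesses ordinary interval enforceability, provided we are first allowed to divide out by the core. So fix a finite lattice $L$ such that every finite group $\overline{G}$ admitting a core-free subgroup $\overline{H}$ with $L \cong \lb \overline{H}, \overline{G} \rb$ lies in $\sG_{\cP}$, and suppose now that $G$ is an arbitrary finite group with $L \cong \lb H, G \rb$ for some $H \leq G$, with no hypothesis whatsoever on $\core_G(H)$. The goal is to conclude $G \in \sG_{\cP}$.

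First I would pass to the core. Set $N = \core_G(H)$, so that $N \trianglelefteq G$ and $N \leq H$, and write $\overline{G} = G/N$, $\overline{H} = H/N$. Since every subgroup $K$ with $H \leq K \leq G$ already contains $N$, the correspondence theorem supplies an inclusion-preserving bijection $K \mapsto K/N$, and restricting it to the interval $\lb H, G \rb$ gives a lattice isomorphism onto $\lb \overline{H}, \overline{G} \rb$. Hence $\lb \overline{H}, \overline{G} \rb \cong \lb H, G \rb \cong L$. The point of working in the quotient is that $\overline{H}$ is now core-free in $\overline{G}$: by the correspondence theorem the normal subgroups of $\overline{G}$ lying inside $\overline{H}$ are exactly the $M/N$ with $M \trianglelefteq G$ and $N \leq M \leq H$, and the largest such $M$ is $\core_G(H) = N$ itself, so $\core_{\overline{G}}(\overline{H})$ is trivial. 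The defining property of $L$ therefore applies to $\overline{G}$ together with its core-free subgroup $\overline{H}$, yielding $\overline{G} = G/N \in \sG_{\cP}$.

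It then remains to lift property $\cP$ from $G/N$ back up to $G$, and this is precisely where the hypothesis $\bH(\sG_{\cP}^c) = \sG_{\cP}^c$ enters. I would argue by contraposition: if $G \notin \sG_{\cP}$, that is $G \in \sG_{\cP}^c$, then because $G/N$ is a homomorphic image of $G$ and $\sG_{\cP}^c$ is closed under homomorphic images, we would obtain $G/N \in \sG_{\cP}^c$, contradicting the conclusion $G/N \in \sG_{\cP}$ just established. Hence $G \in \sG_{\cP}$, and since $G$ was an arbitrary group with $L \cong \lb H, G \rb$, the lattice $L$ witnesses that $\cP$ is interval enforceable.

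I do not expect a genuine obstacle here: the only substantive content is the pair of facts that quotienting by the core preserves the upper interval up to isomorphism and simultaneously renders the subgroup core-free, and both are immediate from the correspondence theorem. The lemma is in essence a bookkeeping argument organized around the single construction $G \longmapsto G/\core_G(H)$, with the homomorphic-image closure of $\sG_{\cP}^c$ doing exactly the work of transporting $\cP$ from the quotient back to $G$.
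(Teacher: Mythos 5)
Your proof is correct and follows essentially the same route as the paper: pass to $\overline{G}=G/\core_G(H)$, note that $\lb H,G\rb\cong\lb H/N,G/N\rb$ with $H/N$ core-free, apply the cf-IE hypothesis to get $G/N\in\sG_{\cP}$, and use $\bH(\sG_{\cP}^c)=\sG_{\cP}^c$ to rule out $G\in\sG_{\cP}^c$. The only difference is presentational --- you spell out the correspondence-theorem details and phrase the last step by contraposition, whereas the paper runs the whole argument as a single contradiction --- but the mathematical content is identical.
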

\begin{proof}
Since $\cP$ is \acs{cfIE}, there is a lattice $L$ such that
\begin{equation}
  \label{eq:100}
L \cong \lb H,G \rb \; \Meet \; \core_G(H)=1 \; \longrightarrow \; G\in \sG_\cP.
\end{equation}
Under the assumption $\bH(\sG_\cP^c) = \sG_\cP^c$ we prove
\begin{equation}
  \label{eq:200}
L \cong \lb H,G \rb \; \longrightarrow \; G\in \sG_\cP.
\end{equation}
If~(\ref{eq:200}) fails, then there is a
group $G\in \sG_{\cP}^c$ with $L\cong \lb H,G \rb$.  Let $N = \core_G(H)$.  Then $L \cong
\lb H/N,G/N \rb$ and $H/N$ is core-free in $G/N$ so, by hypothesis~(\ref{eq:100}),
$G/N \in \sG_\cP$.  But $G/N \in \sG_{\cP}^c$, since $\sG_{\cP}^c$ is closed under homomorphic images.
\end{proof}

In \cite{Palfy:1995}, \Peter\ \Palfy\ gives an example of a lattice that cannot occur as an
upper interval in the subgroup lattice finite solvable group.  (We give other examples
in Section~\ref{sec:parachute-lattices}.) 
In his Ph.D.~thesis~\cite{Basile:2001}, Alberto Basile proves that if
$G$ is an alternating or symmetric group, then there are certain lattices that
cannot occur as upper intervals in $\Sub(G)$. Another class of lattices with
this property is described by Aschbacher and Shareshian in~\cite{Aschbacher:2009}. 
Thus, two classes of groups that are known to be at least \acs{cfIE} are the following:
\begin{itemize}
\item $\sG_0 = \mathfrak{S}^c = $ nonsolvable finite groups;
\item $\sG_1 =\bigl\{G\in \G \mid (\forall n<\omega) \; \bigl(G \neq A_n \Meet  G\neq S_n\bigr) \bigr\}$,
\end{itemize}
where $A_n$ and $S_n$ denote, respectively,
the alternating and symmetric groups on
$n$ letters.
Note that both classes $\sG_0$ and $\sG_1$ satisfy the hypothesis of \ref{lemma-wjd-2}.
Explicitly, $\sG_0^c = \mathfrak{S}$, the class of solvable groups, is closed under homomorphic
images, as is the class $\sG_1^c$ of alternating and symmetric groups. 
Therefore, by Lemma~\ref{lemma-wjd-2}, $\sG_0$ and $\sG_1$ are \IE\ classes.
By contrast, suppose 
there exists a finite lattice $L$ such that
 \[
L \cong \lb H, G \rb \; \Meet \; \core_G(H)=1 \; \longrightarrow \; G
 \text{ is subdirectly irreducible.}  
\]
Lemma~\ref{lemma-wjd-2} does not apply in this case since the class of
subdirectly reducible groups is obviously not closed under homomorphic 
images.\footnote{Recall, for groups \defn{subdirectly irreducible} is equivalent
  to having a unique minimal normal subgroup.
 Every algebra, in particular every group $G$, has a subdirect
  decomposition into subdirectly irreducibles, say, $G\hookrightarrow G/N_1 \times \cdots\times
  G/N_n$, so there are always  subdirectly irreducible homomorphic images.}
In Section \ref{sec:parachute-lattices} 
below we describe
lattices with which we can prove that the following classes are at least 
\acs{cfIE}: 
\begin{itemize}
\item $\sG_2 = $ the subdirectly irreducible groups;
\item $\sG_3 = $ the groups having no nontrivial abelian normal subgroups;
\item $\sG_4 = \{G\in \G \mid C_G(M) = 1 \text{ for all } 1\neq M\subnormal G\}$.
\end{itemize}

We noted above that $\sG_2$ fails to satisfy the hypothesis of
\ref{lemma-wjd-2}. The same can be said of $\sG_3$ and $\sG_4$. That is, 
$\bH(\sG_i^c) \neq \sG_i^c$ for $i= 2, 3, 4$.  To verify this take $H\in
\sG_i$, $K\in \sG_i^c$, and consider $H\times K$.  In each case ($i=2, 3, 4$) we
see that $H\times K$ belongs to $\sG_i^c$, but the homomorphic image
$(H\times K)/(1\times K) \cong H$ does not. 

\subsection{Negations of interval enforceable properties}
\label{sec:negat-interv-enforc}
If a lattice $L$
is isomorphic to an interval in the subgroup lattice of a finite group, then we call
$L$ \defn{group representable}.  Recall, Theorem~\ref{thm:P5} says that the
\acs{FLRP} has a negative answer if we can find a finite lattice that is not group
representable. 

Suppose there exists a property $\cP$ such that both $\cP$ and
its negation $\neg \cP$ are interval enforceable by the lattices $L$ and $L_c$,
respectively.  That is $L\cong \lb H,G \rb$ implies $G$ has property $\cP$, and 
$L_c\cong \lb H_c,G_c \rb$ implies $G_c$ does not have property $\cP$.  
Then clearly the lattice in Figure~\ref{fig:twopanelchute} could not be group
representable.   
\begin{figure}[!h]
  \centering
\begin{tikzpicture}[scale=0.7]
  \node (G) at (0,6.25) [fill,circle,inner sep=1.2pt] {};
  \node (K1) at (-1.75,4) [fill,circle,inner sep=1.2pt] {};
  \node (K2) at (1.75,4) [fill,circle,inner sep=1.2pt] {};
  \node (H) at (0,2) [fill,circle,inner sep=1.2pt] {};

\draw (-.93,5.2) node {$L$};
\draw (.93,5.2) node {$L_c$};

\draw[semithick] 
   (K1) to (H) to (K2)
   (G) to [out=197,in=85] (K1) 
   (K1) to [out=15,in=-95] (G)
   (G) to [out=-15,in=95] (K2) 
   (K2) to [out=165,in=-85] (G);

\end{tikzpicture}
\caption{}
\label{fig:twopanelchute}  
\end{figure}
As the next result shows, however, if a group property and its
negation are interval enforceable by the lattices $L$ and $L_c$, then already
at least one of these lattices is not group representable.
\begin{lemma}
\label{lemma:ie-prop-and-neg}
  If $\cP$ is a group property that is interval enforceable by a group
  representable lattice, then it is not the case that $\neg \cP$ is interval
  enforceable by a group representable lattice. 
\end{lemma}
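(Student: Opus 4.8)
The plan is to argue by contradiction, and the central idea is that any two group representable lattices can be realized \emph{simultaneously} as upper intervals above suitable subgroups of a single finite group, namely a direct product. So suppose, toward a contradiction, that $\cP$ is interval enforceable by a group representable lattice $L$ and that $\neg\cP$ is interval enforceable by a group representable lattice $L_c$. Since $L$ and $L_c$ are group representable, I would fix finite groups $G_1, G_2$ and subgroups $H_1 \leq G_1$, $H_2 \leq G_2$ with $L \cong \lb H_1, G_1 \rb$ and $L_c \cong \lb H_2, G_2 \rb$.

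Next I would form the direct product $G = G_1 \times G_2$ and locate copies of both $L$ and $L_c$ as upper intervals in $\Sub(G)$. The subgroup $1 \times G_2$ is normal in $G$ and lies below $H_1 \times G_2$, so the correspondence theorem yields a lattice isomorphism $\lb H_1 \times G_2,\, G \rb \cong \lb (H_1 \times G_2)/(1\times G_2),\, G/(1\times G_2)\rb \cong \lb H_1, G_1 \rb \cong L$. By the symmetric argument applied to the normal subgroup $G_1 \times 1$, one gets $\lb G_1 \times H_2,\, G \rb \cong \lb H_2, G_2 \rb \cong L_c$. Thus a single finite group $G$ has an upper interval isomorphic to $L$ and an upper interval isomorphic to $L_c$.

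Now I would invoke the two enforceability hypotheses against this same group $G$. Because $\cP$ is interval enforceable by $L$ and $L \cong \lb H_1 \times G_2,\, G \rb$, the defining implication (which is quantified over \emph{all} finite groups) forces $G \in \sG_\cP$. Because $\neg\cP$ is interval enforceable by $L_c$ and $L_c \cong \lb G_1 \times H_2,\, G \rb$, the same reasoning forces $G \in \sG_{\neg\cP} = \sG_\cP^c$. But $G$ cannot both have and fail to have property $\cP$, and this is the desired contradiction.

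The argument is short, and most of its steps are routine; the one observation that carries it is that interval enforceability is a \emph{universal} statement over all finite groups, so a lattice need only appear as some upper interval to trigger its conclusion, and hence it suffices to exhibit one group carrying both intervals at once. The only place demanding verification is the correspondence-theorem computation of the two intervals inside the direct product, where I must confirm that $1\times G_2$ is normal in $G$ and sits below $H_1 \times G_2$ (and symmetrically for $G_1 \times 1$ and $G_1 \times H_2$), so that passing to the quotient really collapses each interval back to $\lb H_1, G_1\rb$ and $\lb H_2, G_2\rb$ respectively. I do not anticipate a genuine obstacle here, so I expect the main content of the lemma to be precisely this direct-product realization rather than any delicate estimate.
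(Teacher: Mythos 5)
Your proposal is correct and follows exactly the paper's own argument: form the direct product $G_1\times G_2$, realize $L\cong \lb H_1\times G_2,\, G_1\times G_2\rb$ and $L_c\cong \lb G_1\times H_2,\, G_1\times G_2\rb$ as upper intervals, and derive the contradiction that one group has both $\cP$ and $\neg\cP$. Your explicit correspondence-theorem justification of the interval isomorphisms is a detail the paper leaves implicit, but the proof is the same.
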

\begin{proof}
Assume $\cP$ is 
interval enforceable by the group representable lattice $L$, and let
$H\leq G$ be groups for which $L\cong \lb H, G\rb$.
If $\neg \cP$ is interval enforceable by the group representable
lattice $L_c$, then there exist $H_c\leq G_c$ satisfying
$L_c\cong \lb H_c, G_c\rb$. Consider the group $G\times G_c$. This
has upper intervals $L\cong \lb H\times G_c, G\times G_c \rb$ and 
$L_c\cong \lb G\times H_c, G\times G_c \rb$ and therefore, by the interval
enforceability assumptions,  the group $G\times G_c$ has the properties
$\cP$ and $\neg \cP$ simultaneously, which is a contradiction.
\end{proof}
To take a concrete example, nonsolvability is \IE.  However, solvability is
obviously not \IE. For, if $L\cong \lb H, G \rb$ then for any nonsolvable 
group $K$ we have $L\cong \lb H\times K, G\times K \rb$, and of course $G\times K$ is
nonsolvable.  Note that here (and in the proof 
of Lemma~\ref{lemma:ie-prop-and-neg}) the group $H\times K$ at the bottom of
the interval is not core-free.  So a more interesting question is whether a
property and its negation can both be \acs{cfIE}.  Again, if such a property were
found, a lattice of the form in Figure~\ref{fig:twopanelchute} would give a
negative answer to the \acs{FLRP}, though this requires additional justification to address
the core-free aspect (see Section \ref{sec:parachute-lattices}).  

This leads to the following question:
If $\cP$ is core-free interval enforceable by a group representable lattice,
does it follow that $\neg \cP$ is not core-free interval enforceable by a group
representable lattice?
We provide an affirmative answer in some special cases, such as when $\cP$ means
``not solvable'' or ``not almost simple.''
Indeed, Lemma~\ref{lem:IE-must-have-wreaths} implies that the class of
solvable groups, and more generally any class of groups that omits certain wreath
products, cannot be core-free interval enforceable by a group representable
lattice. 

\begin{lemma}
\label{lem:IE-must-have-wreaths}
Suppose $\cP$ is core-free interval enforceable by a group
representable lattice. Then, for any finite nonabelian simple group $S$, there
exists a wreath product group of the form $W = S\wr \bar{U}$ that has property
$\cP$. 
\end{lemma}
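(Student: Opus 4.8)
The plan is to realize the witnessing lattice $L$ as an interval above a core-free subgroup inside a suitable wreath product $S \wr \bar U$, and then to invoke the core-free interval enforceability of $\cP$ to conclude that this wreath product lies in $\sG_\cP$. First I would normalize the representation of $L$. Since $\cP$ is core-free interval enforceable by a group representable lattice, fix a group representable $L$ witnessing this, say $L \cong \lb H_0, G_0 \rb$. Passing to the quotient by $N = \core_{G_0}(H_0)$ preserves the interval, $\lb H_0, G_0 \rb \cong \lb H_0/N, G_0/N \rb$, and makes the bottom group core-free; so I may assume $L \cong \lb H_1, G_1 \rb$ with $H_1$ core-free in $G_1$. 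Equivalently, $G_1$ acts faithfully and transitively on $\Omega = G_1/H_1$, and (as recalled in the introduction) the lattice of its systems of imprimitivity is isomorphic to $\lb H_1, G_1 \rb \cong L$.

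Next I would build the wreath product together with the right subgroup. Take $\bar U = G_1$ with its action on $\Omega$ and form $W = S \wr_\Omega G_1 = S^\Omega \rtimes G_1$, where $G_1$ permutes the coordinates of the base group $B = S^\Omega$. Let $D = \{(s, s, \dots, s) : s \in S\} \leq B$ be the diagonal copy of $S$. Because $G_1$ merely permutes coordinates it fixes every constant tuple, hence centralizes $D$; therefore $\hat H := D \times G_1$ is a subgroup of $W$. I claim that $\lb \hat H, W \rb \cong L$ and that $\hat H$ is core-free, which by the implication defining core-free interval enforceability yields $W = S \wr \bar U \in \sG_\cP$, as required.

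To identify the interval I would argue that any $K$ with $\hat H \leq K \leq W$ contains $G_1$, so it surjects onto $W/B \cong G_1$ and decomposes as $K = (K \cap B) \rtimes G_1$, with $K \cap B$ a $G_1$-invariant subgroup of $B = S^\Omega$ containing $D$. Here I would invoke the standard structural fact that, for $S$ nonabelian simple, the subgroups of $S^\Omega$ containing the diagonal are exactly the partition subgroups $S_\pi = \{(s_\omega) : s_\omega = s_{\omega'} \text{ whenever } \omega, \omega' \text{ lie in a common block of } \pi\}$, one for each partition $\pi$ of $\Omega$; and $S_\pi$ is $G_1$-invariant precisely when $\pi$ is a $G_1$-invariant partition, i.e. a system of imprimitivity. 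Thus $K \mapsto \pi$ is a lattice isomorphism from $\lb \hat H, W \rb$ onto the lattice of systems of imprimitivity of $G_1$ on $\Omega$, which is $\cong \lb H_1, G_1 \rb \cong L$. For core-freeness I would note that a nontrivial normal subgroup of $W$ contained in $\hat H$ meets $B$ in a normal subgroup of $W$ lying inside $D$; the only nontrivial normal subgroup of $W$ contained in $B$ is $B$ itself (as $G_1$ is transitive on the simple factors), and $B \not\leq \hat H$, so this intersection is trivial and the subgroup centralizes the socle $B$. But $C_W(B) = 1$, since $S$ is nonabelian (so $Z(B) = 1$) and $G_1$ acts faithfully on $\Omega$ (so no nontrivial element of $G_1$ induces an inner automorphism of $B$ by permuting coordinates). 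Hence $\core_W(\hat H) = 1$.

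The main obstacle is choosing the correct subgroup at the bottom of the interval. The naive candidates fail: the point stabilizer in an imprimitive or product action, or the diagonal paired only with $H_1$ rather than with all of $G_1$, produces a strictly larger, polluted interval (an extra atom coming from the internal action of $S$, or an entire sublattice of $H_1$-invariant partitions). The key realization is that pairing the full diagonal $D$ with the \emph{entire} top group $G_1$ simultaneously rigidifies the base part (forcing $K \cap B$ to be a partition subgroup) and the top part (forcing the $G_1$-component to be everything), so that the interval collapses exactly onto the system-of-imprimitivity lattice $L$. The two technical points demanding care are the cited bijection between diagonal-containing subgroups of $S^\Omega$ and partitions of $\Omega$ (which genuinely uses the simplicity of $S$) and the computation $C_W(B) = 1$ underpinning core-freeness (which uses that $S$ is nonabelian and that $H_1$ was made core-free).
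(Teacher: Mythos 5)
There is a genuine gap, and it is a duality error at the crux of the argument. Your construction is exactly Kurzweil's (the same single step the paper uses), your classification of the subgroups of $S^\Omega$ containing the untwisted diagonal as partition subgroups $S_\pi$ is a correct standard fact, and your core-freeness argument via $C_W(B)=1$ is correct (and arguably cleaner than the paper's elementwise computation). But the map $K \mapsto \pi$ from $\lb \hat H, W \rb$ to the lattice of systems of imprimitivity is order-\emph{reversing}, not an isomorphism: if $\sigma$ refines $\pi$ then $S_\pi \leq S_\sigma$, so larger subgroups $K = S_\pi \rtimes G_1$ correspond to \emph{finer} partitions, while the standard isomorphism between systems of imprimitivity and $\lb H_1, G_1 \rb$ is order-preserving (finer partition $\leftrightarrow$ smaller subgroup). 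Hence $\lb \hat H, W \rb \cong L'$, the dual of $L$ -- the paper flags exactly this, noting $\lb D, S^n \rb \cong \Eq(n)'$ and $L' \cong \lb D\bar{G}, U \rb$. Since the core-free interval enforceability hypothesis is a statement about intervals isomorphic to $L$, not $L'$, your final step ``invoke cf-IE to conclude $W \in \sG_\cP$'' fails whenever $L$ is not self-dual.

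The repair is precisely what the paper does: apply the construction \emph{twice}. Your own argument, run on the pair $(U, \hat H)$ with $\hat H = D\bar{G}$ core-free in $U$ and $L' \cong \lb \hat H, U \rb$, produces a second wreath product $S^m \rtimes \bar{U}$ (with $m = |U : \hat H|$) containing a core-free subgroup $D_1\bar{U}$ with $\lb D_1\bar{U}, S^m\bar{U} \rb \cong L'' = L$, at which point cf-IE applies. Note that this two-step structure is already visible in the lemma's statement: the conclusion is about $W = S \wr \bar{U}$, where $U$ is itself the first-stage wreath product, whereas your one-step construction yields $S \wr G_1$ with $G_1$ the original group. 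So your proposal contains all the right ingredients but stops one iteration short, and the missing iteration is not optional bookkeeping -- it is the mechanism that undoes the dualization your (correct) partition-subgroup analysis introduces.
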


\begin{proof}
  Let $L$ be a group representable lattice such that if $L\cong \lb H,G \rb$ and
  $\core_G(H)=1$ then $G\in \sG_\cP$.
  Since $L$ is group representable, there exists a $\cP$-group $G$ with $L
  \cong \lb H,G \rb$. 
  We apply an idea of Hans Kurzweil (see~\cite{Kurzweil:1985}) twice.
Fix a finite nonabelian simple
  group $S$. Suppose the index of $H$ in $G$ is $|G:H| = n$.
  Then the action of $G$ on the cosets of $H$ induces an automorphism of the
  group $S^n$ by permutation of coordinates.  Denote this representation by
  $\phi: G \rightarrow \Aut(S^n)$, 
  and let the image of $G$ be $\phi(G) =
  \bar{G} \leq \Aut(S^n)$.  
  The wreath product under this action is the group
  \[
  U:= S\wr_\phi G = S^n \rtimes_\phi G = S^n \rtimes \bar{G}, 
  \]
  with multiplication given by
  \[
  (s_1, \dots, s_n, x) (t_1, \dots, t_n, y) = 
  (s_1 t_{x(1)}, \dots, s_nt_{x(n)}, x y),
  \]
  for $s_i, t_i \in S$ and $x, y \in \bar{G}$.  (For the remainder of the proof,
  we suppress the semidirect product symbol and write, for example, $S^n\bar{G}$
  instead of $S^n \rtimes \bar{G}$.)

  An illustration of the subgroup lattice of such a wreath product appears in
  Figure~\ref{fig:kurzweil}.  Note that the interval
  $\lb D, S^n \rb$, where $D$ denotes the diagonal subgroup of
  $S^n$, is isomorphic to $\Eq(n)'$, the dual of the lattice of partitions of an
  $n$-element set.
  The dual lattice $L'$ is an upper interval of $\Sub(U)$, namely,
  $L'\cong \lb D\bar{G}, U \rb$.\footnote{These facts, which were proved by Kurzweil
    in~\cite{Kurzweil:1985}, are discussed in greater detail in~\cite[Section 2.2]{DeMeo:thesis}.} 

  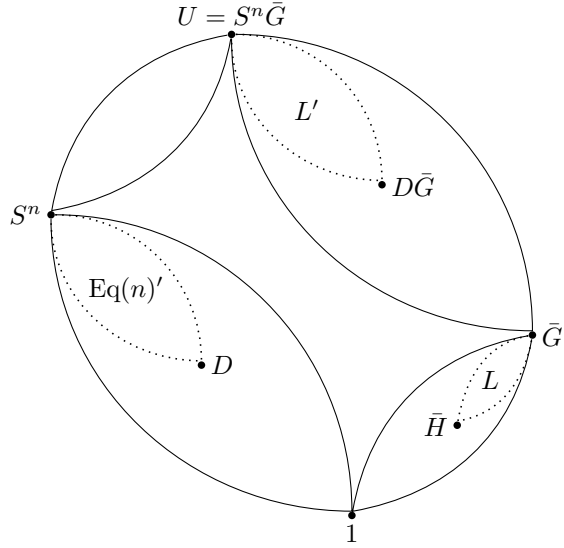
\begin{figure}[!h]
\begin{center}
  \begin{tikzpicture}[scale=.8]
    \node (G) at (3,3) [fill,circle,inner sep=1pt] {}; 
    \draw (G) node [right] {$\bar{G}$};
    \node (H) at (1.75,1.5) [fill,circle,inner sep=1pt] {}; 
    \draw (H) node [left] {$\bar{H}$};
    \node (Sn) at (-5,5) [fill,circle,inner sep=1pt] {}; 
    \draw (Sn) node [left] {$S^n$};
    \node (D) at (-2.5,2.5) [fill,circle,inner sep=1pt] {}; 
    \draw (D) node [right] {$D$};
    \node (DG) at (0.5,5.5) [fill,circle,inner sep=1pt] {}; 
    \draw (DG) node [right] {$D \bar{G}$};
    \node (1) at (0,0) [fill,circle,inner sep=1pt] {}; 
    \draw (1) node [below] {$1$};
    \node (SnG) at (-2,8) [fill,circle,inner sep=1pt] {}; 
    \draw (SnG) node [above] {$U=S^n \bar{G}$};
   \draw
    (G) to [out=190,in=80] (1) to [out=10,in=-100] (G)
    (Sn) to [out=0,in=90] (1) to [out=180,in=-90] (Sn)
    (SnG) to [out=190,in=80] (Sn) to [out=10,in=-100] (SnG)
    (SnG) to [out=0,in=90] (G) to [out=180,in=-90] (SnG);
    \draw[dotted, semithick]
    (G) to [out=190,in=80] (H) to [out=10,in=-100] (G)
    (SnG) to [out=0,in=90] (DG) to [out=180,in=-90] (SnG)
    (Sn) to [out=0,in=90] (D) to [out=180,in=-90] (Sn);
    \draw 
    (-3.75,3.75) node {$\Eq(n)'$}
    (-.75,6.75) node {$L'$}
    (2.3,2.25) node {$L$};
  \end{tikzpicture}
\end{center}
    \caption{Hasse diagram illustrating some features of the subgroup lattice of
      the wreath product $U$.}
    \label{fig:kurzweil}
  \end{figure}
  It is important to note (and we prove below) that if $H$ is core-free in $G$ --
  equivalently, if $\ker \phi = 1$ -- then the foregoing construction results in
  the subgroup $D\bar{G}$ being core-free in $U$.  Therefore, by repeating the
  foregoing procedure, with  
  $H_1 = D\bar{G}$ denoting the (core-free) subgroup of $U$ such that $L' \cong
  \lb H_1, U \rb$, we find that $L = L''\cong \lb D_1 \bar{U}, S^m\bar{U} \rb$, where $m = |U:H_1|$, and $D_1$ denotes the diagonal subgroup of $S^m$.
    Since $D_1\bar{U}$ will be core-free in $S^m \bar{U}$ 
    then, it follows by the original hypothesis that $S^m \bar{U} = S \wr
    \bar{U}$ must have property $\cP$.

  To complete the proof, we check that starting with a core-free subgroup
  $H \leq G$ in the Kurzweil construction just described results in a
  core-free subgroup $D \bar{G} \leq U$.   Let $N = \core_U(D\bar{G})$.  Then, for all $w=(d,\dots, d, x) \in N$ and for all 
  $u = (t_1,\dots, t_n, g)\in U$, we have $u w u^{-1}\in N$. 
  Fix $w=(d,\dots, d, x) \in N$.  We will choose $u\in U$ so that
  the condition $u w u^{-1}\in N$ implies $x$ acts trivially on $\{1, \dots, n\}$.
  First note that if $u = (t_1,\dots, t_n, 1)$, then
  \begin{align*}
  u w u^{-1} &= (t_1,\dots, t_n, 1) (d, \dots, d, x) (t_1^{-1},\dots, t_n^{-1}, 1)\\
  &=(t_1 d \,t_{x(1)}^{-1},\dots, t_nd \,t_{x(n)}^{-1}, 1) \in N,
  \end{align*}
  and this implies that $t_1 d\, t_{x(1)}^{-1} = t_2 d\, t_{x(2)}^{-1} =\cdots = t_nd \,t_{x(n)}^{-1}$. 
  Suppose by way of contradiction that $x(1) = j\neq 1$.  Then, since $x$ is a
  permutation (hence, one-to-one), $x(k) \neq j$ for
  each $k\in \{2, 3, \dots, n\}$.  Pick one such $k$ other than $j$.
  (This is possible since $n = |G:H|>2$; for otherwise $H\subnormal G$
  contradicting $\core_G(H)=1$.) 
  Since $u \in U$ is arbitrary, we may assume
  $t_1 = t_k$ and $t_{x(1)}=t_j\neq t_{x(k)}$.  
  But this contradicts $t_1 d\, t_{x(1)}^{-1} = t_k d\, t_{x(k)}^{-1}$.
  Therefore, $x(1) = 1$.  The same argument shows that 
  $x(i) = i$ for each $1\leq i\leq n$, 
  and we see that
  $w=(d,\dots,d, x) \in N$ implies $x\in \ker \phi = 1$.  This puts $N$ below
  $D$, and the only normal subgroup of $U$ that lies 
  below $D$ is the trivial group.
\end{proof}
By the foregoing result we conclude that a class of groups that does
not include wreath products of the form $S\wr G$, where $S$ is an arbitrary
finite nonabelian simple group, is not a core-free interval enforceable class. 
The class of solvable groups is an example.


\subsection{Dedekind's rule}
\label{sec:dedekinds-rule}
When $A$ and $B$ are subgroups of a group $G$, by $AB$ we mean the set
$\{ a b \mid a\in A, b\in B\}$, and we write $A \join B$ or $\<A, B\>$ to denote
the subgroup of $G$ generated by $A$ and $B$.  
Clearly $AB \subseteq \<A,B\>$; 
equality holds if and only if $A$ and $B$ \emph{permute}, by which we mean $A B = B A$.

We will need the following well known result:\footnote{See~\cite[p.~122]{Rose:1978}, for example.}  
\begin{theorem}[Dedekind's rule]
  \label{lemma-dedekind}
Let $G$ be a group and let $A, B$ and $C$ be subgroups of $G$ with $A\leq B$.  Then,
\begin{align}
\label{eq:dedekind1}
A(C\cap B) &= AC \cap B,\qquad \text{ and }\\
\label{eq:dedekind2}
(C\cap B)A &= CA \cap B.
\end{align}
\end{theorem}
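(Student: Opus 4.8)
The plan is to establish the first identity~(\ref{eq:dedekind1}) by proving the two inclusions separately, and then to deduce the second identity~(\ref{eq:dedekind2}) formally by applying the inversion map rather than repeating the argument.

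For the inclusion $A(C \cap B) \subseteq AC \cap B$, I would take a typical element $ac$ with $a \in A$ and $c \in C \cap B$. Membership in $AC$ is immediate from the definition of the product set. For membership in $B$, I observe that $a \in A \leq B$ and $c \in B$, so the product $ac$ lies in $B$ because $B$ is a subgroup and hence closed under multiplication. Thus $ac \in AC \cap B$, and this direction is purely formal.

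The reverse inclusion $AC \cap B \subseteq A(C \cap B)$ is where the hypothesis $A \leq B$ does the real work, and it is the only step that is not routine. I would take $x \in AC \cap B$ and write $x = ac$ with $a \in A$ and $c \in C$; the goal is to show that the factor $c$ can in fact be chosen inside $B$. Since $a \in A \leq B$ we have $a^{-1} \in B$, and combined with $x \in B$ this gives $c = a^{-1}x \in B$. As $c \in C$ already, we conclude $c \in C \cap B$ and therefore $x = ac \in A(C \cap B)$. Together with the previous paragraph this yields~(\ref{eq:dedekind1}).

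For~(\ref{eq:dedekind2}) I would apply the inversion map $g \mapsto g^{-1}$ to both sides of~(\ref{eq:dedekind1}). Because $A$, $B$, $C$, and $C \cap B$ are all subgroups, each is closed under inversion, so $(A(C\cap B))^{-1} = (C\cap B)A$ and $(AC \cap B)^{-1} = CA \cap B$; hence~(\ref{eq:dedekind1}) transforms directly into~(\ref{eq:dedekind2}). (Alternatively one could simply rerun the double-inclusion argument with the roles of left and right multiplication interchanged.) The main point to flag is that the hypothesis $A \leq B$ is genuinely indispensable, entering precisely at the step $c = a^{-1}x \in B$; it encodes the modular-law content of the statement, and the identity fails in general without it.
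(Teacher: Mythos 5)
Your proof is correct. Note that the paper itself gives no proof of Theorem~\ref{lemma-dedekind}: it is stated as a well-known fact with a footnote citing Rose, and your double-inclusion argument---with the hypothesis $A\leq B$ entering exactly at the step $c = a^{-1}x \in B$---is precisely the standard argument found in such references, so there is no divergence to report. Your derivation of~(\ref{eq:dedekind2}) from~(\ref{eq:dedekind1}) via inversion is also valid, since $(XY)^{-1} = Y^{-1}X^{-1}$ and $(X\cap Y)^{-1} = X^{-1}\cap Y^{-1}$ hold as set identities and all sets involved are subgroups, hence closed under inversion.
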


For $A \in \lb H, G \rb$, let $A^{\perp(H,G)}$ denote the set of
complements of $A$ in the interval $\lb H, G\rb$.  That is,
\[
A^{\perp(H,G)} := \{B \in \lb H, G \rb \mid A \cap B = H, \, \<A, B\> = G\}.
\]
Clearly $H^{\perp(H,G)} = \{G\}$ and $G^{\perp(H,G)} = \{H\}$.
Recall that an \emph{antichain} of a partially ordered set is a subset of
pairwise incomparable elements.

\begin{corollary}
\label{cor:dedekind1}
Let $A \in \lb H, G\rb$ and let 
$\cB$ be a nonempty subset of the set $A^{\perp(H,G)}$ of complements of $A$ in
$\lb H, G \rb$.  If every group in $\cB$ permutes with $A$, then $\cB$ is an
antichain. 
\end{corollary}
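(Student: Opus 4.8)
The plan is to prove that any two comparable members of $\cB$ must coincide; since an antichain is precisely a subset in which no two distinct elements are comparable, this suffices. So I would begin by fixing $B_1, B_2 \in \cB$ with $B_1 \leq B_2$ and aim to show $B_1 = B_2$. The first observation is that, because each $B_i$ is a complement of $A$ in $\lb H, G \rb$, we have $A \cap B_i = H$ and $\langle A, B_i \rangle = G$. The crucial use of the permutability hypothesis comes next: since $B_i$ permutes with $A$, the remark immediately preceding this corollary upgrades the join $\langle A, B_i \rangle = G$ to the set-product equality $A B_i = B_i A = G$. It is exactly this upgrade that makes Dedekind's rule applicable with a collapsing right-hand side.

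With these facts in hand, I would apply Dedekind's rule, namely identity \eqref{eq:dedekind1}, with its ``$A$'' taken to be $B_1$, its ``$B$'' taken to be $B_2$ (legitimate precisely because $B_1 \leq B_2$), and its ``$C$'' taken to be $A$. This yields
\[
B_1 (A \cap B_2) = B_1 A \cap B_2.
\]
The left-hand side equals $B_1 H = B_1$, since $A \cap B_2 = H$ and $H \leq B_1$; the right-hand side equals $G \cap B_2 = B_2$, since $B_1 A = G$. Hence $B_1 = B_2$, as desired, and $\cB$ is therefore an antichain.

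I do not anticipate a genuine obstacle here: the argument is a one-line application of Dedekind's rule once the setup is right. The only point that requires care---and the place where the permutability hypothesis is indispensable---is the conversion of $\langle A, B_i \rangle = G$ into the set-theoretic product $A B_i = G$; without permutability one only has $A B_i \subseteq \langle A, B_i \rangle = G$ with possibly proper containment, and then the right-hand side of the Dedekind identity need not reduce to $B_2$. A secondary bookkeeping point is to align the variables so that the inclusion hypothesis $H \leq B_1 \leq B_2$ of Dedekind's rule is respected; I note in passing that only the permutability of the smaller subgroup $B_1$ with $A$ is actually used in the computation, although the hypothesis conveniently supplies it for every member of $\cB$.
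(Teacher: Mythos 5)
Your proof is correct and follows essentially the same route as the paper's: the identical application of Dedekind's rule with the chain $B_1 = B_1H = B_1(A\cap B_2) = B_1A \cap B_2 = G\cap B_2 = B_2$, merely phrased as ``comparable implies equal'' rather than as the paper's contradiction from $B_1 \leq B_2$ with $B_1 \neq B_2$. Your explicit remark that permutability is what upgrades $\<A,B_1\> = G$ to the set equality $B_1A = G$ is exactly the point the paper's terser proof leaves implicit.
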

\begin{proof}
If $\cB$ is a singleton, the result holds
trivially. So assume $B_1$ and $B_2$ are distinct groups in $\cB$. 
We prove $B_1 \nleq B_2$.  Indeed, if $B_1 \leq B_2$, then 
Theorem~\ref{lemma-dedekind} implies
\[
B_1 = B_1H = B_1(A \cap B_2) = B_1A \cap B_2 = G \cap B_2 = B_2,
\]
which is a contradiction.
\end{proof}


\subsection{Parachute lattices}
\label{sec:parachute-lattices}
We now prove the equivalence of statements (B) and (C) 
of Section~\ref{sec:intro}.

\begin{theorem}
\label{thm-wjd-1}
The following statements are equivalent:
\begin{enumerate}
\item[(B)] Every finite lattice is isomorphic to
  an interval in the subgroup lattice of a finite group.

\item[(C)]
Suppose $n\geq 2$ and $\sL = \{L_1, \dots, L_n\}$ is a set of
finite lattices, at least two of which have more than two elements.
For each $i = 1, 2, \dots, n$, let $\sG_i$ denote the class that is
core-free interval enforcable by $L_i$. Then there exists a finite group $G \in
\bigcap\limits_{i=1}^n \sG_i$ such that for 
each $L_i \in \sL$ we have $L_i\cong \lb H_i, G \rb$ for some subgroup
$H_i$, where every $H_i \leq Y <G$ is core-free in $G$.
\end{enumerate}
\end{theorem}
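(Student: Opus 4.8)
The plan is to route the equivalence through an auxiliary \emph{parachute lattice}. Given $\sL=\{L_1,\dots,L_n\}$, where we may assume each $L_i$ has a least element $0_i$ and a greatest element $1_i$ with $0_i\neq 1_i$, I would build $P=P(\sL)$ from the disjoint union of the $L_i$ by identifying all of the tops $1_i$ to a single top $\hat 1$ and adjoining a new bottom $\hat 0$ below everything. A direct check shows $P$ is a lattice whose atoms are exactly the images $\kappa_i$ of the $0_i$; these are pairwise complementary over $\hat 0$, since $\kappa_s\meet\kappa_t=\hat 0$ and $\kappa_s\join\kappa_t=\hat 1$ for $s\neq t$, and the panels $\lb\kappa_i,\hat 1\rb\cong L_i$ meet pairwise only in $\hat 1$ (this is the shape suggested by Figure~\ref{fig:twopanelchute}). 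Call a panel \emph{nontrivial} when $|L_i|>2$, i.e.\ when it has an interior element.

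The direction (C)$\Rightarrow$(B) is the routine one. Let $L$ be an arbitrary finite lattice; lattices with at most two elements are trivially intervals, so assume $|L|>2$. Choosing any lattice $L'\neq L$ with $|L'|>2$ and applying (C) to $\sL=\{L,L'\}$ produces a finite group $G$ with $L\cong\lb H_1,G\rb$, so $L$ is group representable, which is statement (B).

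For (B)$\Rightarrow$(C), I would apply (B) to $P(\sL)$ to get $\lb H,G\rb\cong P$. Replacing $G$ by $G/\core_G(H)$ and $H$ by its image (the correspondence theorem preserves the interval, and the image of $H$ becomes core-free), I may assume $\core_G(H)=1$. Under the isomorphism the atoms become subgroups $H_i:=\kappa_i$ with $L_i\cong\lb H_i,G\rb$, and distinct panels share only $G$. The crux is the claim: \emph{if $M\trianglelefteq G$ and $M\neq 1$, then $MH=G$.} Granting it, every $Y$ with $H_i\le Y<G$ is core-free, for otherwise $M:=\core_G(Y)\neq 1$ would satisfy $MH\le Y<G$, contradicting $MH=G$. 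This supplies the required core-free subgroups; and since each $\sG_i$ is by definition core-free interval enforceable by $L_i$, the existence of a core-free $H_i$ with $L_i\cong\lb H_i,G\rb$ forces $G\in\sG_i$ for every $i$, so $G\in\bigcap_i\sG_i$, which is exactly (C).

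It remains to prove the claim, and this is where the hypothesis of at least two nontrivial panels is essential. Suppose $M\trianglelefteq G$, $M\neq 1$, but $MH<G$. Since $H$ is core-free, $M\not\le H$, so $MH>H$; and as $MH\neq G$ it lies in the interior of a unique panel, say $\lb\kappa_c,G\rb$. Having two nontrivial panels, I may pick a nontrivial panel $d\neq c$ and an element $Z$ with $\kappa_d<Z<G$. Because $H\le\kappa_d$ and $M$ is normal, $MH\join\kappa_d=M\join\kappa_d=M\kappa_d$, and since $MH$ lies in panel $c$ while $\kappa_d$ lies in panel $d\neq c$ this join is $G$; hence $M\kappa_d=G$. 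Dedekind's rule (Theorem~\ref{lemma-dedekind}) with $\kappa_d\le Z$ now gives $\kappa_d(M\cap Z)=M\kappa_d\cap Z=Z$. But $M\cap Z\le MH\cap Z=H$, as $MH$ and $Z$ lie in distinct panels and so meet in the bottom $H$; therefore $Z=\kappa_d(M\cap Z)\le\kappa_d H=\kappa_d$, contradicting $\kappa_d<Z$. The main obstacle I anticipate is getting the parachute construction and the core-free reduction exactly right — verifying that distinct panels meet only at the top (so that $MH\cap Z=H$) and that the atoms are pairwise complementary — and pinning down why a single nontrivial panel does not suffice: the Dedekind step requires an interior $Z$ in a panel \emph{different} from the one containing $MH$.
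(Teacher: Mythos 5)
Your proof is correct and takes essentially the same route as the paper: you construct the same parachute lattice, reduce to core-free $H$ via the correspondence theorem, and derive the key claim ($M\neq 1$ normal implies $MH=G$) by applying Dedekind's rule to an interior element $Z$ of a nontrivial panel different from the one containing $MH$ --- which is precisely the computation underlying the paper's Corollary~\ref{cor:dedekind1} (there $B_1=B_1(A\cap B_2)=B_1A\cap B_2=B_2$ applied to the comparable complements $K_d<Z$ of the permuting subgroup $NH$), inlined rather than cited. The only other cosmetic difference is that you omit the extra two-element panel (the atom $K$ in Figure~\ref{fig:parachute}), which plays no role in the paper's argument either.
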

\begin{remark}
By (C), the \acs{FLRP} would have a negative answer if we
could find a collection $\sG_1, \dots, \sG_n$ of \acs{cfIE} classes
such that $\bigcap\limits_{i=1}^n \sG_i$ is empty.
\end{remark}

\begin{proof}
Obviously (C) implies (B).  Assume (B) holds and assume the hypotheses of (C).
Construct a new lattice, denoted $\sP = \sP(L_1, \dots, L_n)$, as shown in
the Hasse diagram of Figure~\ref{fig:parachute} (a), where the bottoms of
the $L_i$ sublattices are atoms in $\sP$.
\begin{figure}[centering]
  \caption{The parachute construction.}
  \label{fig:parachute}
\begin{center}
{\scalefont{.8}
\begin{tikzpicture}[scale=0.7]

  \node (G) at (-8,0) [fill,circle,inner sep=1.2pt] {};
  \node (K) at (-11.5,-2) [fill,circle,inner sep=1.2pt] {};
  \node (K1) at (-9.9,-2.8) [fill,circle,inner sep=1.2pt] {};
  \node (K2) at (-8,-3.2) [fill,circle,inner sep=1.2pt] {};
  \node (Kn) at (-5.2,-2.2) [fill,circle,inner sep=1.2pt] {};
  \node (H) at (-8,-7) [fill,circle,inner sep=1.2pt] {};

\draw (-10,-1) node {$L_1$};
\draw (-9,-1.5) node {$L_2$};
\draw (-8,-1.6) node {$L_3$};
\draw (-6.5,-1) node {$L_n$};
\draw (-6.75,-2.8) node {$\dots$};

\draw (-8,-8.25) node {(a)};

\draw[semithick] 
   (K) to (H) to (K1)
   (K2) to (H) to (Kn);

\draw [semithick]  
   (G) to [out=-140,in=0] (K)
   (K)  to [out=55,in=185] (G)
   (G) to [out=-105,in=30] (K1)
   (K1) to [out=80,in=-140] (G)
   (G) to [out=-70,in=60] (K2)
   (K2)  to [out=110,in=-110] (G)
   (G) to [out=-10,in=110] (Kn)
   (Kn)  to [out=170,in=-50] (G);


  \node (Gr) at (1,0) [fill,circle,inner sep=1.2pt] {};
  \node (Kr) at (-2.5,-2) [fill,circle,inner sep=1.2pt] {};
  \node (K1r) at (-0.9,-2.8) [fill,circle,inner sep=1.2pt] {};
  \node (K2r) at (1,-3.2) [fill,circle,inner sep=1.2pt] {};
  \node (Knr) at (3.8,-2.2) [fill,circle,inner sep=1.2pt] {};
  \node (Hr) at (1,-7) [fill,circle,inner sep=1.2pt] {};

\draw (-1,-1) node {$L_1$};
\draw (0,-1.5) node {$L_2$};
\draw (1,-1.6) node {$L_3$};
\draw (2.5,-1) node {$L_n$};
\draw (2.25,-2.8) node {$\dots$};
\draw (1,-8.25) node {(b)};

\draw (Gr) node [above] {$G$}
    (Kr) node [left] {$K$}
    (K1r) node [left] {$K_1$}
    (K2r) node [left] {$K_2$}
    (Knr) node [right] {$K_n$}
    (Hr) node [right] {$H$};

\draw[semithick] 
   (Kr) to (Hr) to (K1r)
   (K2r) to (Hr) to (Knr);

\draw [semithick]  
   (Gr) to [out=-140,in=0] (Kr)
   (Kr)  to [out=55,in=185] (Gr)
   (Gr) to [out=-105,in=30] (K1r)
   (K1r) to [out=80,in=-140] (Gr)
   (Gr) to [out=-70,in=60] (K2r)
   (K2r)  to [out=110,in=-110] (Gr)
   (Gr) to [out=-10,in=110] (Knr)
   (Knr)  to [out=170,in=-50] (Gr);
\end{tikzpicture}
}
\end{center}
\end{figure}
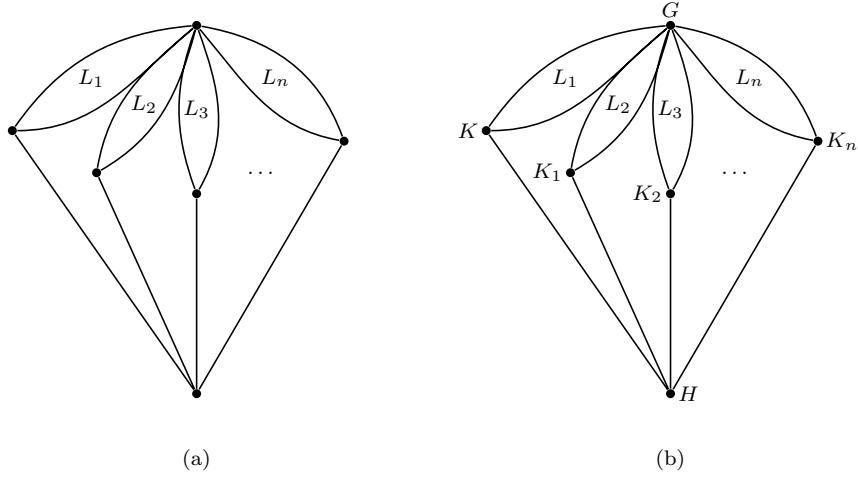
By (B), there exist groups $H <G$ with $\sP \cong \lb H,G \rb$.  We can assume $H$
is a core-free subgroup of $G$.  (If not, replace $G$ and $H$ with
$G/N$ and $H/N$, where $N=\core_G(H)$.)
Let $K, K_1, \dots, K_n$ be the subgroups in which $H$ is maximal
and for which $L_i \cong \lb K_i, G \rb,\; 1\leq i\leq n$. 
(Figure~\ref{fig:parachute} (b).)
We will prove that, for each $1\leq i\leq n$ every proper subgroup of 
$G$ that contains $K_i$ is core-free in $G$.
It then follows that $G\in \sG_i$ for all 
$1\leq i \leq n$, and so $G \in \bigcap\limits_{i=1}^n \sG_i$. 

Choose $Y$ such that $K_j \leq Y < G$.  We will prove $Y$ is core-free.
If $N = \core_G(Y)$ were nontrivial, then since $H$ is core-free,
we would have $K_j \leq NH \leq Y$.  Now, $NH$ permutes with all $X \in
\lb H, G\rb$, since for such $X$ we have $X NH = NX H = NHX$.  Therefore, 
if $N$ is nontrivial, then the set $(NH)^{\perp(H,G)}$, the
complements of $NH$ in $\lb H, G\rb$, forms an antichain by
Corollary~\ref{cor:dedekind1}. This contradicts the assumption that at least
two of the lattices $L_i$ have more than two elements.
\end{proof}

By a \emph{parachute lattice}, denoted $\sP(L_1, \dots, L_m)$,
we mean a lattice just like the one illustrated in 
Figure~\ref{fig:parachute}.
We identify some special group properties that are core-free interval
enforceable by a parachute lattice.
\begin{lemma}
\label{lemma-wjd-5}
 Let $\sP = \sP(L_1, \dots, L_n)$ with $n\geq 2$ and $|L_i|>2$ for at least two 
$i$, and suppose $\sP \cong \lb H, G \rb$ with $H$ core-free in $G$.  
\begin{enumerate}[(i)]
\item If $1\neq N \subnormal G$, then $NH = G$ and $C_G(N)=1$.
\item $G$ is subdirectly irreducible and nonsolvable.
\end{enumerate}
\end{lemma}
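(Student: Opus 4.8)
The plan is to establish part (i) first for $1\neq N\subnormal G$ and then read off part (ii) almost for free. The half of (i) asserting $NH=G$ is exactly the antichain argument already used in the proof of Theorem~\ref{thm-wjd-1}: since $H$ is core-free and $N\neq 1$ we have $N\nleq H$, so $NH>H$; and $NH$ permutes with every member of $\lb H,G\rb$, so if $H<NH<G$ then $NH$ sits in the interior of one branch of $\sP$, and the atom together with an interior point of a \emph{different} fat branch are two comparable complements of $NH$, contradicting Corollary~\ref{cor:dedekind1}. Hence $NH=G$. For the centralizer statement I would note that $C_G(N_2)\leq C_G(N_1)$ whenever $N_1\leq N_2$, so it suffices to prove $C_G(M)=1$ for every minimal normal subgroup $M$; once that is done, every nontrivial normal $N$ contains some minimal normal $M$, whence $C_G(N)\leq C_G(M)=1$.

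Throughout I would pass to the faithful coset action $G\hookrightarrow\Sym(G/H)$ afforded by core-freeness, under which $\lb H,G\rb$ is the lattice of block systems and $NH=G$ says precisely that $N$ is transitive. The one structural tool needed is: if $N\subnormal G$ is \emph{regular} on $G/H$, then $G=N\rtimes H$ and, by Dedekind's rule (Theorem~\ref{lemma-dedekind}), the map $A\mapsto A\cap N$ is an isomorphism from $\lb H,G\rb$ onto the lattice of $H$-invariant subgroups of $N$.

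The crux is $C_G(M)=1$ for minimal normal $M$. First I would dispose of the abelian case: if $M$ is abelian it is a transitive abelian group, hence regular, so $\lb H,G\rb\cong\sP$ becomes the lattice of $H$-submodules of $M$; two atoms from different branches are simple submodules meeting in $0$ and generating $M$, so $M=S_1\oplus S_2$ has composition length $2$, leaving no room for the interior point of a fat branch -- a contradiction. Thus every minimal normal $M$ is nonabelian, $M\cong T^k$ with $T$ nonabelian simple and $Z(M)=1$. Now suppose $C:=C_G(M)\neq 1$. Then $C$ is nontrivial normal, so $CH=G$ and $C$ is transitive; since $M$ centralizes the transitive group $C$ it is semiregular, and being transitive it is in fact regular, so the lattice isomorphism above applies to $M$. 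Moreover the conjugation map $c\colon G\to\Aut(M)$ has kernel $C$, and $CH=G$ forces $c(H)=c(CH)=c(G)\supseteq c(M)=\Inn(M)$. Hence every $H$-invariant subgroup of $M$ is $\Inn(M)$-invariant, i.e.\ normal in $M$; but the normal subgroups of $T^k$ are the subproducts, and the $H$-invariant ones correspond to unions of $H$-orbits on the $k$ simple factors. Therefore $\sP\cong\lb H,G\rb$ would be a Boolean lattice $\mathbf 2^{r}$. This is impossible for a parachute: that any two atoms join to the top forces $r\leq 2$, whereas the presence of two branches with more than two elements forces the interval above an atom, $\cong\mathbf 2^{\,r-1}$, to exceed two elements, i.e.\ $r\geq 3$. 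The contradiction gives $C_G(M)=1$, completing (i).

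Part (ii) then follows at once. If $G$ had two distinct minimal normal subgroups $M_1\neq M_2$, then $[M_1,M_2]\leq M_1\cap M_2=1$, so $1\neq M_2\leq C_G(M_1)=1$, absurd; hence $G$ has a unique minimal normal subgroup and is subdirectly irreducible. For nonsolvability, a nontrivial abelian normal subgroup $A$ would give $1\neq A\leq C_G(A)$, contradicting (i); so $G$ (nontrivial, since it carries a nontrivial interval) has no nontrivial abelian normal subgroup and is therefore nonsolvable. The step I expect to be the real work is $C_G(M)=1$: one must set up the permutation facts (centralizer of a transitive group is semiregular, an abelian transitive group is regular), justify the regular-subgroup lattice correspondence through Dedekind's rule, and -- the decisive point -- recognize that the two features \emph{built into} a parachute, that distinct atoms join to the top and that at least two branches are nontrivial, are exactly what clashes with the length-$2$ (abelian) and Boolean (nonabelian) pictures forced by a nontrivial centralizer.
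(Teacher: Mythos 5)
Your proof is correct, and much of it coincides with the paper's: the $NH=G$ half of (i) is the same antichain argument (via Corollary~\ref{cor:dedekind1}) recycled from the proof of Theorem~\ref{thm-wjd-1}, and your part (ii) --- uniqueness of the minimal normal subgroup via $[M_1,M_2]\leq M_1\cap M_2=1$ and $M_2\leq C_G(M_1)=1$, then nonsolvability --- is essentially the paper's argument verbatim. Where you genuinely diverge is the crux, $C_G(M)=1$ for $M$ minimal normal. The paper never leaves the subgroup lattice: if $C:=C_G(M)\neq 1$, then $C\subnormal N_G(M)=G$ gives $CH=G$ by the first half of (i); picking any $K$ with $H<K<G$, Dedekind's rule forces $1<M\cap K<M$ strictly, and $M\cap K$ is normalized by $H$ (as $H\leq K$) and centralized by $C$, hence normal in $CH=G$, contradicting minimality of $M$ --- about three lines, with no abelian/nonabelian case split and no permutation representation. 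You instead pass to the faithful action on $G/H$, prove $M$ is regular (transitivity of $C$ makes its centralizer $M$ semiregular; transitive plus semiregular is regular, and ``transitive abelian implies regular'' handles the abelian case), set up the regular-normal-subgroup correspondence $A\mapsto A\cap M$ onto the $H$-invariant subgroups of $M$ (your Dedekind computations $(M\cap A)H=MH\cap A=A$ and $BH\cap M=B(H\cap M)=B$ are exactly right), and then show the resulting lattice clashes with the parachute shape: composition length $2$ in the abelian case, and Boolean $\mathbf{2}^r$ in the nonabelian case via $c(G)=c(CH)=c(H)\supseteq \Inn(M)$ and the subproduct description of normal subgroups of $T^k$, with ``any two atoms join to the top'' forcing $r\leq 2$ while a fat branch forces $r\geq 3$. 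I find no gap in this; the cost is a stack of standard but nontrivial permutation-group facts, while the benefit is a sharper structural picture (a nontrivial centralizer forces $M$ regular and the interval Boolean, in the spirit of the Kurzweil-type correspondence the paper already uses in Lemma~\ref{lem:IE-must-have-wreaths}). One small simplification available to you: in the abelian case, since $M$ is abelian and regular, every $H$-invariant subgroup of $M$ is normal in $MH=G$, so minimality of $M$ makes the interval a two-element chain outright, with no need for the $S_1\oplus S_2$ composition-length count.
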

\begin{remark}
If $N$ is abelian, then $N \leq C_G(N)$, so (i) implies
that every nontrivial normal subgroup of $G$ is nonabelian.  
\end{remark}
\begin{proof}
(i)
Assume $1\neq N \subnormal G$. 
As above, we let $K_i$
denote the subgroups of $G$ corresponding to the atoms of $\sP$, and by the
same argument used to prove Theorem~\ref{thm-wjd-1},
we see that every subgroup $Y$ with $H \leq Y < G$ is core-free in $G$.  
Therefore, $NY=G$ for all $H \leq Y < G$. In particular, $NH=G$. 

To prove that $C_G(N)=1$, let $1\neq M \leq N$ be a minimal normal subgroup of
$G$ contained in $N$.  It suffices to prove $C_G(M)= 1$.
Note that $C_G(M) \subnormal N_G(M) =G$.  If $C_G(M)$ were nontrivial, then it
would follow by (1) that $C_G(M)H = G$.
Consider any $H< K < G$. Then $1 < M\cap K < M$ (strictly, by
Dedekind's rule). Now $M\cap K$ is normalized by $H$ and centralized
(hence normalized) by $C_G(M)$.  
Therefore, $M\cap K \subnormal C_G(M)H = G$, contradicting the minimality of
$M$.  

To prove (ii) we first show that $G$ has a unique minimal normal subgroup.  Let
$M$ be a minimal
normal subgroup of $G$ and let $N \subnormal G$ be any normal subgroup not 
containing $M$.  We show that $N = 1$.  Since both subgroups
are normal, the commutator subgroup
$[M,N]$
lies in the intersection $M\cap N$, which is trivial by the minimality of $M$.   
Thus, $M$ and $N$ centralize each other.  In particular,
$N \leq C_G(M) = 1$, by (i).
Finally, since $G$ has a unique minimal normal subgroup that is nonabelian, $G$
is nonsolvable.
\end{proof}

Given two group theoretical properties $\cP_1$ and $\cP_2$, we write
$\cP_1 \longrightarrow \cP_2$ to denote that a group $G$ has property $\cP_1$ only
if is also has property $\cP_2$. 
Thus, we clearly have 
\[
\quad \cP_1 \longrightarrow \cP_2 \quad
\Longleftrightarrow \quad \sG_{\cP_1}\subseteq
\sG_{\cP_2},
\]
where, as above, $\sG_{\cP_i}$ is the class of groups 
having property $\cP_i$. The conjunction $\cP_1 \meet \cdots \meet \cP_n$ corresponds to the class 
\[
\bigcap_{i=1}^n \sG_{\cP_i} = \{G \in \G \mid G \text{ has property $\cP_i$ for
  all $1\leq i\leq n$} \},
\]
and the following is an immediate corollary of the parachute construction:
\begin{corollary}
\label{cor:isle-prop-groups-1}
  If $\cP_1, \dots, \cP_n$ are cf-\IE\ properties,
  then so is $\cP_1 \meet \cdots \meet \cP_n$.
\end{corollary}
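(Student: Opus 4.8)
The plan is to realize the conjunction by the parachute construction of Theorem~\ref{thm-wjd-1}. The case $n=1$ is trivial, since $\cP_1 \meet \cdots \meet \cP_n = \cP_1$ is \acs{cfIE} by hypothesis, so I would assume $n \geq 2$. For each $i$, choose a finite lattice $M_i$ witnessing that $\cP_i$ is \acs{cfIE}; that is, whenever $M_i \cong \lb H, G \rb$ with $\core_G(H)=1$ we have $G \in \sG_{\cP_i}$. My candidate witness for $\cP_1 \meet \cdots \meet \cP_n$ is then the parachute lattice $\sP = \sP(M_1, \dots, M_n)$, whose legs are exactly the $M_i$.

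The heart of the argument is already carried out in the proof of Theorem~\ref{thm-wjd-1}, and I would simply invoke it. Suppose $\sP \cong \lb H, G \rb$ with $H$ core-free in $G$, and let $K_1, \dots, K_n$ be the atoms of $\sP$ for which $M_i \cong \lb K_i, G \rb$. The antichain argument there (via Corollary~\ref{cor:dedekind1} and Dedekind's rule) shows that every proper subgroup $Y$ with $K_i \leq Y < G$ is core-free in $G$; taking $Y = K_i$ yields that each atom $K_i$ is itself core-free. Since $M_i$ witnesses that $\cP_i$ is \acs{cfIE} and $M_i \cong \lb K_i, G \rb$ with $K_i$ core-free, it follows that $G \in \sG_{\cP_i}$. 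As this holds for every $i$, we obtain $G \in \bigcap_{i=1}^n \sG_{\cP_i} = \sG_{\cP_1 \meet \cdots \meet \cP_n}$, so $\sP$ witnesses that the conjunction is \acs{cfIE}.

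The one point needing care, which I expect to be the main obstacle, is the structural hypothesis of the parachute construction: the antichain contradiction requires at least two of the legs to have more than two elements. If fewer than two of the chosen $M_i$ are that large, I would pad the list with two extra legs equal to some fixed lattice $L^*$ with $|L^*|>2$. These extra legs impose no additional constraint on $G$ (equivalently, each enforces the trivial property $\cP_{\top}$ held by every finite group, for which every lattice is a witness since the conclusion $G \in \G$ is automatic, and $\sG_{\cP_{\top}} = \G$). Hence adjoining them leaves $\bigcap_i \sG_{\cP_i}$ unchanged while guaranteeing the size hypothesis. Running the argument of the previous paragraph on the padded parachute then gives core-freeness of every atom, in particular of the $n$ atoms belonging to the original legs $M_1, \dots, M_n$, which is exactly what is needed to force $\cP_1, \dots, \cP_n$ simultaneously. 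This reduces the corollary to a bookkeeping consequence of Theorem~\ref{thm-wjd-1}, with the only genuine subtlety being the treatment of degenerate leg sizes.
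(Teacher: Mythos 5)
This is correct and is precisely the paper's argument: the paper gives no separate proof, presenting the statement as ``an immediate corollary of the parachute construction,'' and your writeup is exactly that construction --- form $\sP(M_1,\dots,M_n)$ from the witnesses, invoke the antichain/Dedekind argument from the proof of Theorem~\ref{thm-wjd-1} to conclude every atom $K_i$ is core-free, and let each witness $M_i \cong \lb K_i, G\rb$ fire. Your padding of the leg list to satisfy the ``at least two legs with more than two elements'' hypothesis is exactly the bookkeeping the paper leaves implicit, and your handling of it (extra legs enforce only the trivial property, so the intersection is unchanged) is sound.
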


By Theorem~\ref{thm-wjd-1}, Lemma~\ref{lemma-wjd-5}, and
Corollary \ref{cor:isle-prop-groups-1}, we see that the \acs{FLRP} has a
positive answer (that is, statement (B) is true) if and only if for every finite
lattice $L$ there is a finite group $G$ satisfying all of the following:
\begin{enumerate}[(i)]
\item $L\cong \lb H, G \rb$;
\item $G$ is nonsolvable, nonalternating, and nonsymmetric;
\item $\core_G(Y) = 1$ for all $H\leq Y < G$;
\item $G$ has a unique minimal normal subgroup $M$, which satisfies $C_G(M) =
  1$;
in particular, $M$ is nonabelian and satisfies $MY = G$ for all $H\leq Y \leq G$.

\end{enumerate}

 \bibliography{wjd}
\bibliographystyle{plainurl}

\end{document}